\newcommand{\ourparagraph}[1]{%
  \medskip

  \noindent\emph{#1}.
}
\newcommand{\ag}[1]{\todo[color=green!20!yellow, size=\scriptsize]{\textbf{AG:} #1}}
\newcommand{\cev}[1]{\reflectbox{\ensuremath{\vec{\,\,\reflectbox{\ensuremath{#1}}}}}\!\!}
\newcommand{\lbl}{\ell} 
\newcommand{\flbl}{\vec{\ell}} 
\newcommand{\blbl}{\cev{\ell}} 
\begin{document}
\title{Branch and Price for the Length-Constrained Cycle Partition Problem}
%
%
\author{Mohammed Ghannam\inst{1,2,4}\orcidID{0000-0001-9422-7916} \and
Gioni Mexi\inst{2}\orcidID{0000-0003-0964-9802} \and
Edward Lam \inst{3}\orcidID{0000-0002-4485-5014} \and
Ambros Gleixner\inst{1,2}\orcidID{0000-0003-0391-5903}}
\authorrunning{M. Ghannam et al.}
\institute{HTW Berlin, Germany \and
Zuse Institute Berlin, Germany \and
Monash University, Australia \and
felmo GmbH, Germany\\}
\maketitle
\begin{abstract}

    The length-constrained cycle partition problem (LCCP) is
    a graph optimization problem in which a set of nodes must be partitioned into a minimum number of cycles.
    Every node is associated with a critical time and the length of every cycle must not exceed the critical time of any node in the cycle.
    We formulate LCCP as a set partitioning model and solve it using an exact branch-and-price approach.
    We use a dynamic programming-based pricing algorithm to generate improving cycles,
    exploiting the particular structure of the pricing problem for efficient bidirectional search and symmetry breaking.
    Computational results show that the LP relaxation of the set partitioning model produces strong dual bounds and our branch-and-price method improves significantly over the state of the art.
    It is able to solve closed instances in a fraction of the previously needed time and closes $13$~previously unsolved instances,
  one of which has~$76$ nodes, a notable improvement over the previous limit of~$52$ nodes.
\keywords{Branch-and-Price \and Column Generation \and Cycle Partitioning \and Dynamic Programming \and Bidirectional Search}
\end{abstract}
%
%

\section{Introduction}\label{sec:intro}
\newcommand{\cycles}{\Omega}
\newcommand{\cycle}{C}
\newcommand{\fa}{\text{for all }}
A \emph{cycle partition} of an undirected graph~$G=(V,E)$ is a partition of the set of nodes~$V$ into disjoint cycles. In the \emph{length-constrained cycle partition problem} (LCCP) introduced in~\cite{hoppmann2020minimum},
we are additionally given a \emph{critical time}~$q_i > 0$ for every node~$i \in V$ and a \emph{travel time}~$t_{i,j} \geq 0$ for every edge~$\{i,j\} \in E$.
We call a cycle $\cycle=(i_0,i_1,\ldots,i_K=i_0)$ \emph{length-feasible} if it satisfies the
length constraint
\begin{equation}\label{eq:lc}
t(\cycle) := t_{i_0,i_1} + t_{i_1,i_2} + \ldots + t_{i_{K-1},i_K}
\leq q(\cycle) := \min \{ q_{i_0},q_{i_1},\ldots,q_{i_K} \},
\end{equation}
i.e., the total time to traverse the edges of the cycle must not exceed the critical time
of any node in the cycle.
In other words an agent that continuously travels along the cycle will visit each node~$i$ of the cycle with frequency at least~$q_i$.
The LCCP then amounts to computing the smallest number of disjoint, length-feasible cycles
that cover all nodes.

The setting of LCCP is found to be useful, e.g., in applications where the objective is to find the smallest number of agents to conduct periodic surveillance or maintenance actions~\cite{hoppmann2022mathematical}.
Similar types of length constraints can be found in the literature on minimizing cycle length in kidney exchange programmes~\cite{Lam:2019ab}, where the length is simply the number of nodes in the cycle.

In \cite{hoppmann2020minimum}, Hoppmann-Baum et al. show that the LCCP is NP-hard by reduction from the \emph{traveling salesman problem} (TSP), and that no polynomial-time approximation algorithm exists.
They present a compact mixed-integer programming (MIP) formulation based on the Miller, Tucker, and Zemlin (MTZ)~\cite{miller1960integer} formulation of the TSP. This compact model was able to solve instances with up to 29 nodes to proven optimality.
In \cite{lccp}, the same authors introduced a MIP formulation based on subtour elimination constraints (SEC), valid inequalities derived from cliques in conflict hypergraphs, and an efficient primal heuristic.
A branch-and-cut implementation of the SEC model based on a state-of-the-art commercial
MIP solver was able to solve instances with up to 52 nodes.
%
Despite the mentioned improvements, the proposed models suffer from
symmetry, leading to a weak linear relaxation and ineffective branching.

The goal of our work is to try and overcome several bottlenecks of the previous approaches by considering a set
partitioning formulation based on cycle variables.
Let~$\Omega$ denote the set of all length-feasible cycles, and let~$a_i^\cycle \in \{0,1\} $ be a
constant equal to $1$ iff node~$i$ is contained in cycle~$\cycle$.
Introducing binary variables $\lambda_\cycle$ to indicate whether cycle $\cycle \in \Omega$ is used in the solution, we arrive at the set partitioning formulation
\begin{equation}\label{prob:imp}
\begin{array}{llll}
\min\;\; & \sum_{\cycle \in \Omega} \lambda_\cycle & \\[1ex]
\text{s.t.} & \sum_{\cycle \in \Omega} a_i^\cycle  \lambda_\cycle &= 1  & \quad \fa i \in V, \\[1ex]
              & \mbox{}\hfill\lambda_{\cycle} &\in \{0,1\} & \quad \fa \cycle \in \Omega.
       \end{array}
\end{equation}
This reformulation of LCCP is inspired by formulations for the
cardinality-constrained multi-cycle problem in kidney exchange~\cite{Lam:2019ab}, where a maximum-value packing of cycles is required, and by exact methods in many logistics applications such as vehicle routing, see, e.g.,~\cite{costa_exact_2019}.
As in these applications, it is not viable to solve \eqref{prob:imp} explicitly due to the
exponential number of variables.
Instead, we apply \emph{column generation} in order to solve the linear programming (LP)
relaxation of~\eqref{prob:imp} dynamically, and integrate this into an
exact \emph{branch-and-price} algorithm enabling us to solve LCCP instances to proven
optimality, see, e.g.,~\cite{cg-book,Wolsey2020} for a general overview of this methodology.

A major computational bottleneck in this approach is the
so-called pricing problem which needs to be solved repeatedly in order to dynamically add
variables with negative reduced cost to the LP relaxation of~\eqref{prob:imp}.
Given travel times $t_{i,j} \geq 0$ for $\{i,j\}\in E$ and both critical times $q_i > 0$ and
weights $\pi_i\in\mathbb{R}$ for~$i\in V$, the pricing problem amounts to computing
a cycle $C$ that satisfies the length constraint~\eqref{eq:lc} and
maximizes the sum of node weights.
We call this the \emph{length-constrained prize-collecting cycle
problem} (LCPCCP).
Note that problem data may be non-metric, i.e., travel times $t_{i,j}$ may violate the triangle inequality, and that the node weights~$\pi_i$, which are
derived as dual multipliers associated with the partitioning constraint of the node in the
LP relaxation, are not restricted in sign.
Even in the metric case and when critical times are assumed to be constant, one can show that LCPCCP is strongly NP-hard by reduction from the Hamiltonian cycle problem.

The main contributions of this paper are as follows.
In \autoref{sec:labeling} we present a new dynamic programming algorithm, also known as
label setting, for LCPCCP including dominance rules for reducing the search space and an efficient bidirectional search that allows us to enumerate cycles
only half-way.
In \autoref{sec:bnp} we integrate the resulting column generation scheme into an exact
branch-and-price algorithm enhanced by symmetry breaking, heuristic pricing, parallel
pricing, early branching, and techniques to exploit the triangle inequality for metric
input data.
In \autoref{sec:results} we describe the results of our computational study to analyze the
performance of the new approach on benchmark instances from the literature and to quantify
the impact of the individual improvement techniques.
The full branch-and-price algorithm is able to solve $13$~open instances with up to $76$~nodes, and is on average $14.6$~times faster than the best previous approach.


\section{Dynamic Programming for the Length-Constrained Prize-Collecting Cycle Problem}\label{sec:labeling}
\newcommand{\redcost}{\overline{c}}
\newcommand{\lblnodes}{\mathscr{N}}
\newcommand{\lbldef}{(\lblnodes, v, \redcost, t, q)}

Column generation is an advanced technique for solving large linear programs and is an essential subroutine in branch-and-price algorithms~\cite{cg-book,Wolsey2020}.
The method has recently been found to date back to works by Kantorovich and Zalgaller in $1951$~\cite{uchoa_kantorovich_nodate,KantorovichZalgaller1951}, and was independently developed by Gomory and Gilmore~\cite{GilmoreGomory1961}.

In order to solve the LP relaxation of \eqref{prob:imp} by column
generation, we first note that the upper bounds $\lambda_\cycle\leq 1$ are implied by the
partitioning constraints and can be removed.
We call the resulting LP the \emph{master problem} (MP).
Second, we replace $\Omega$ with a subset of columns $\Omega' \subset \Omega$ to form
the \emph{restricted master problem} (RMP).
Column generation then repeatedly optimizes the RMP, each time producing a primal-dual
pair of solutions~$(\lambda, \pi)$ and solving the so-called \emph{pricing problem} to
check whether cycles~$\cycle$ with negative reduced cost
\begin{equation}
  \label{prob:pp}
  \textstyle\redcost(\cycle) = 1 - \sum_{i \in V} a^\cycle_i \pi_i = 1 - \sum_{i \in C} \pi_i
\end{equation}
exist in $\Omega \setminus \Omega'$.
If yes, these are added to $\Omega'$ and the RMP is
reoptimized; if not, then $(\lambda, \pi)$ is optimal for MP.
We refer to~\cite{cg-book,Wolsey2020} for details on column generation.

As explained above, the pricing problem of minimizing~\eqref{prob:pp} amounts to the length-constrained prize-collecting cycle problem.
%
%
In the following, suppose we restrict LCPCCP by fixing one node $s\in V$ to be
contained in the cycle.
%
%
Then the resulting problem closely resembles a
\emph{resource-constrained shortest path problem} (RCSPP)~\cite{irnich_resource_2008}
typically solved in vehicle routing 
applications on a directed graph to 
find a path of minimum reduced cost
that starts at and returns to a given node
such that resources accumulated along arcs lie within a constant resource interval specified for each node.
%
Similarly, we propose to solve LCPCCP for a fixed start node using dynamic programming as
an implicit enumeration scheme over all length-feasible cycles starting from~$s$.
%

To this end, we form \emph{partial cycles} (paths) and iteratively extend these paths with incident unvisited nodes until $s$ is visited again, essentially closing the cycle.
If at any point a partial cycle becomes infeasible (cannot be extended to a length-feasible cycle) or is proved to be dominated (would only lead to cycles with same or larger reduced cost), it is discarded, see~\autoref{sec:pruning}.
This process is repeated until all non-dominated length-feasible cycles are explored. If there are cycles with negative reduced cost, the ones with minimum reduced cost are returned.



Let~$P=(i_0,i_1, \dotsc, i_K)$ be a path in $G=(V,E)$ with nodes $i_0, \dotsc, i_K \in V$ and edges $\{i_0,i_1\}, \dotsc, \{i_{K-1},i_K\} \in E$.
The
path $P$ can be represented
by a so-called \emph{label}~$\lbl = \lbldef$ where
\begin{itemize}
    \item $\lblnodes = \lblnodes(\lbl) = \{i_1, \dotsc, i_K\}$ is the (unordered) set of nodes without start node,
    \item $v = v(\lbl) = i_K$ is the last node visited in~$P$,
    \item $\redcost = \redcost(\lbl) = 1 - \sum_{i \in \lblnodes } \pi_i$ is the reduced cost of~$P$,
    \item $t = t(\lbl) = \sum_{k=1}^K t_{i_{k-1},i_k} $ is the total travel time taken by~$P$, and
    \item $q = q(\lbl) = \min_{k=0,1, \dotsc, K} q_{i_k}$ is the minimum critical time of all nodes in~$P$.
\end{itemize}

The initial label for a starting node~$s$ is given as~$\lbl_s=( \{\}, s, 1, 0, q_s)$.
During the search, a label~$\lbl = \lbldef$ can be \emph{extended} using any
edge~$\{v(\lbl),j\}$ from the last node to a new node $j \in V \setminus \lblnodes$.
This creates a new label
$\lbl^+$ where $\lblnodes(\lbl^+) = \lblnodes(\lbl) \cup \{j\}$, $v(\lbl^+) = j$, $\redcost(\lbl^+) = \redcost(\lbl) - \pi_j$, $t(\lbl^+) = t(\lbl) + t_{i,j}$, and $q(\lbl^+) = \min\{q(\lbl), q_j\}$.
%
We call a label~$\lbl^* \neq \lbl_s$  \emph{fully-extended} if it represents a
cycle, i.e., if $v(\lbl^*) = s$.
Note that LCCP allows singleton cycles (loops) as part of the solution; these are included
here as label~$(\{s\}, s, 1-\pi_s, 0, q_s)$.
In order to recover the cycle corresponding to a fully-extended label, we also keep track
of each label's predecessor, i.e., the label from which it was extended.

\subsection{Pruning the Search Space by Feasibility and Dominance}
\label{sec:pruning}

Besides the data structures used to represent and extend labels, the efficiency of the
resulting dynamic programming method largely depends on how much of the search space needs
to be explored in order to terminate with a provably optimal cycle.
%
Clearly, a label~$\lbl$ can be discarded if it can be proven to only lead to length-infeasible
cycles. This is the case if~$ t(\lbl) > q(\lbl)$.
Then for any extension~$\lbl^+$ of label~$\lbl$, we have $  t(\lbl^+) \geq t(\lbl) > q(\lbl) \geq q(\lbl^+)$.
By induction, all fully-extended labels~$\lbl^*$ obtained by successive extensions of label~$\lbl$ satisfy~$t(\lbl^*) > q(\lbl^*)$, hence the corresponding cycles violate the length constraint~\eqref{eq:lc}.

An effective technique for pruning the search space further is to exploit dominance relations.
A label~$\lbl_a$ is said to \emph{dominate} label~$\lbl_b$ if they have the same end node~$v=v(\ell_a)=v(\ell_b)$, and all cycles reachable from successive extensions of~$\lbl_a$ have an equal or lower reduced cost than those reachable from successive extensions of~$\lbl_b$.
Then again, by induction, we may discard label~$\lbl_b$ in the search, because at least
one minimum reduced cost cycle must remain.

\begin{lemma}\label{lemma:dominate}
    A label~$\lbl_a$ dominates a label~$\lbl_b$ if~$v(\lbl_a) = v(\lbl_b)$ and the conditions
    \begin{subequations}
        \begin{align}
            \redcost(\lbl_a) &\leq \redcost(\lbl_b), \label{dom-redcost} \\
            t(\lbl_a) &\leq t(\lbl_b), \text{ and} \label{dom-time} \\
            \lblnodes(\lbl_a) &\subseteq \lblnodes(\lbl_b) \label{dom-subset}
        \end{align}
    \end{subequations}
    hold.
\end{lemma}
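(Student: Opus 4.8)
The plan is to show that the three dominance conditions imply the definitional requirement: every length-feasible cycle reachable from $\lbl_b$ can be matched by a reachable cycle from $\lbl_a$ whose reduced cost is no larger. The natural strategy is a coupling argument in which I take an arbitrary sequence of extensions that closes $\lbl_b$ into a cycle and apply the \emph{same} sequence of extensions to $\lbl_a$.

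First I would fix an arbitrary fully-extended label $\lbl_b^*$ obtained from $\lbl_b$ by a sequence of edge extensions along nodes $j_1, j_2, \dotsc, j_m = s$, where each $j_r \notin \lblnodes(\lbl_b)$ at the time of its extension. The key observation is that condition~\eqref{dom-subset}, $\lblnodes(\lbl_a) \subseteq \lblnodes(\lbl_b)$, guarantees that none of these new nodes already lies in $\lblnodes(\lbl_a)$ either, so the identical extension sequence is \emph{valid} from $\lbl_a$ (each extension uses the same edge $\{v(\cdot), j_r\}$, and $v(\lbl_a) = v(\lbl_b)$ ensures the first extension edge is available to both). This produces a fully-extended label $\lbl_a^*$ representing a genuine cycle through $s$.

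Next I would compare the two resulting cycles. For the reduced cost, each extension subtracts the same $\pi_{j_r}$, so the reduced cost difference is preserved: $\redcost(\lbl_a^*) - \redcost(\lbl_b^*) = \redcost(\lbl_a) - \redcost(\lbl_b) \leq 0$ by~\eqref{dom-redcost}. For \emph{feasibility} I must verify that whenever $\lbl_b^*$ is length-feasible, so is $\lbl_a^*$. Each extension adds the same travel time $t_{v,j_r}$, so $t(\lbl_a^*) - t(\lbl_b^*) = t(\lbl_a) - t(\lbl_b) \leq 0$ by~\eqref{dom-time}, giving $t(\lbl_a^*) \leq t(\lbl_b^*)$. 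For the critical time, $q(\lbl_a^*) = \min\{q(\lbl_a), \min_r q_{j_r}\}$ and $q(\lbl_b^*) = \min\{q(\lbl_b), \min_r q_{j_r}\}$; since~\eqref{dom-subset} implies every node of $\lblnodes(\lbl_a)$ is also in $\lblnodes(\lbl_b)$, we have $q(\lbl_a) \geq q(\lbl_b)$, hence $q(\lbl_a^*) \geq q(\lbl_b^*)$. Combining, $t(\lbl_a^*) \leq t(\lbl_b^*) \leq q(\lbl_b^*) \leq q(\lbl_a^*)$, so $\lbl_a^*$ is length-feasible as well.

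The main subtlety I would guard against is the set-representation of labels: because $\lblnodes$ is an \emph{unordered} set and extensions forbid revisiting nodes, I must confirm that copying the extension sequence from $\lbl_b$ to $\lbl_a$ never attempts to add a node twice — this is exactly what~\eqref{dom-subset} secures, since any node not yet visited from $\lbl_b$ is a fortiori not yet visited from $\lbl_a$. Having established that every length-feasible cycle from $\lbl_b$ corresponds to a length-feasible cycle from $\lbl_a$ of no greater reduced cost, the definition of dominance is satisfied, and $\lbl_b$ may be discarded without losing any minimum-reduced-cost cycle. I would note in passing that condition~\eqref{dom-time} alone is what transfers feasibility, while~\eqref{dom-subset} does double duty in both guaranteeing extension validity and controlling the critical-time comparison; the hardest part is simply being careful that the coupled extension is always legal, not the inequalities themselves.
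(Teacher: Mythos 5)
Your proposal is correct and follows essentially the same route as the paper's proof: condition~\eqref{dom-subset} guarantees that every extension available to $\lbl_b$ is also available to $\lbl_a$ and yields $q(\lbl_a)\geq q(\lbl_b)$, which together with~\eqref{dom-time} transfers length-feasibility, while~\eqref{dom-redcost} gives the cost comparison. Your version merely makes the coupling of extension sequences more explicit than the paper does.
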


\begin{proof}
    From~\eqref{dom-subset} we have that the set of possible extensions of label~$\lbl_b$ is a subset of the set of possible extensions of label~$\lbl_a$.
    From~\eqref{dom-subset} it also follows that $q(\lbl_a) = \min_{i\in\lblnodes(\lbl_a)} q_i \geq \min_{i\in\lblnodes(\lbl_b)} q_i = q(\lbl_b)$.
    Together with~\eqref{dom-time} this proves that any feasible extension of~$\lbl_b$ is also feasible for~$\lbl_a$. Therefore, any cycle reachable from~$\lbl_b$ is also reachable from~$\lbl_a$, and from \eqref{dom-redcost} it has an equal or lower reduced cost. Therefore,~$\lbl_a$ dominates~$\lbl_b$.
\end{proof}

\subsection{Bidirectional Search}

Exploring the search space from both forward and backward directions simultaneously is
another successful technique to reduce the number of explored labels.
For the RCSPP, \cite{tilk_asymmetry_2017,tilk_bidirectional_2020} has shown that
bidirectional search can be more efficient than monodirectional search.
The efficiency comes from the fact that the
number of non-dominated labels can grow exponentially with the length of the paths.
Hence, if it is possible to explore forward and backward paths only until a halfway point and merge them to full paths, this can drastically reduce the total number of labels generated.

For LCPCCP, we can apply bidirectional search even more effectively than for the general RCSPP. Since the graph is undirected and the resource consumption by travel times is symmetric, only extensions from
one direction are needed. Specifically, in bidirectional search for a fixed start node~$s$, we only extend labels~$\lbl$
with $t(\lbl) < \frac{q(\lbl)}{2}$.
We then \emph{merge} non-dominated labels as follows.

\newcommand{\concat}{||}
Let labels~$\lbl_a$ and $\lbl_b$ both start at $s$, end at $v=v(\lbl_a) = v(\lbl_b)$, and be otherwise disjoint, i.e., $\lblnodes(\lbl_a) \cap \lblnodes(\lbl_b) = \{v\}$.
Then we create a cycle by concatenating the path represented by~$\lbl_a$ and, in reverse order, the path represented by~$\lbl_b$.
%
We denote the newly merged label by $\lbl_a\concat\lbl_b$, given by
$\lblnodes(\lbl_a\concat\lbl_b) = \lblnodes(\lbl_a) \cup \lblnodes(\lbl_b) \cup \{s\}$, $v(\lbl_a\concat\lbl_b) = s$, $\redcost(\lbl_a\concat\lbl_b) = \redcost(\lbl_a) + \redcost(\lbl_b) - \pi_s + \pi_{v(\lbl_a)} - 1$,
    $t(\lbl_a\concat\lbl_b) = t(\lbl_a) + t(\lbl_b)$, and
    $q(\lbl_a\concat\lbl_b) = \min\{q(\lbl_a), q(\lbl_b)\}$, see~\autoref{fig:merge} for an example.

\begin{figure}[ht]
    \centering
    \begin{tikzpicture}
        \node[draw, circle, label=180:$\lbl_a$] (11) at (0,0) {1};
        \node[draw, circle] (3) at (1,0) {3};
        \node[draw, circle] (5) at (2,0) {5};

        \node[draw, circle, label=0:$\lbl_b$] (12) at (6,0) {1};
        \node[draw, circle] (4) at (5,0) {4};
        \node[draw, circle] (2) at (4,0) {5};

        \draw[->, black] (11) -- (3);
        \draw[->, black] (3) -- (5);

        \draw[->, black] (12) -- (4);
        \draw[->, black] (4) -- (2);

        \node[draw, circle, label=180:$\lbl_a \concat \lbl_b$] (m11) at (0,-1) {1};
        \node[draw, circle] (m3) at (1,-1) {3};
        \node[draw, circle] (m5) at (3,-1) {5};
        \node[draw, circle] (m4) at (5,-1) {4};
        \node[draw, circle] (m12) at (6,-1) {1};

        \draw[->, black] (m11) -- (m3);

        \draw[->, black, thick] (m3) -- (m5);
        \draw[->, black, thick] (m5) -- (m4);
        \draw[->, black, thick] (m4) -- (m12);
    \end{tikzpicture}
\caption{Example of merging two partial cycles.}
\label{fig:merge}
\end{figure}
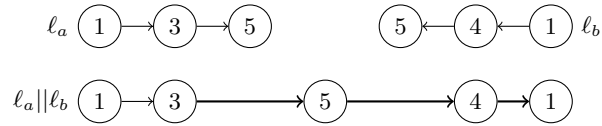


\begin{lemma}\label{lemma:merge}
   Dynamic programming with dominance and bidirectional search finds two labels~$\lbl_a$ and~$\lbl_b$ such that $\lbl_a\concat\lbl_b$ represents a length-feasible cycle with minimum reduced cost.
\end{lemma}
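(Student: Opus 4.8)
The plan is to establish \emph{completeness} in two stages: first, that at least one forward/backward split of an optimal cycle respects the halfway extension bound (so the unpruned search would generate both halves), and second, that adding dominance never destroys \emph{all} such splits. Fix a start node~$s$ and let $\cycle^* = (s = i_0, i_1, \dotsc, i_K = s)$ be a length-feasible cycle through~$s$ of minimum reduced cost; the global pricing answer is the best such $\cycle^*$ over all choices of~$s$. Write $t^F_k$ for the travel time of the forward prefix $F_k = (s, i_1, \dotsc, i_k)$, so that $0 = t^F_0 \le t^F_1 \le \dotsb \le t^F_K = t(\cycle^*)$, and let the reversed prefix $B_k = (s, i_{K-1}, \dotsc, i_k)$ have travel time $t^B_k = t(\cycle^*) - t^F_k$. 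Merging $\lbl_a = F_m$ with $\lbl_b = B_m$ at the shared end node $v = i_m$ reconstructs $\cycle^*$: the node sets meet only in~$v$, and by the merge formulas $t(\lbl_a\concat\lbl_b) = t(\cycle^*)$, $q(\lbl_a\concat\lbl_b) = q(\cycle^*)$, and the reduced cost telescopes to $\redcost(\cycle^*)$. It therefore suffices to exhibit an index~$m$ for which both $F_m$ and $B_m$ are produced by the search.

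For the second step I would use that a partial path $F_k$ is generated precisely when every proper prefix satisfied the extension bound $t < q/2$. Since every node of any sub-path of $\cycle^*$ lies on $\cycle^*$, each such sub-path has minimum critical time at least $q(\cycle^*)$, so the single inequality $t^F_{m-1} < q(\cycle^*)/2$ already forces every earlier prefix strictly below its own threshold $q(F_j)/2 \ge q(\cycle^*)/2$; hence $F_m$ is generated, and symmetrically $B_m$ is generated once $t^B_{m+1} < q(\cycle^*)/2$. Choosing $m$ as the first index at which $t^F_m$ reaches $q(\cycle^*)/2$ gives $t^F_{m-1} < q(\cycle^*)/2$, and using $t(\cycle^*) \le q(\cycle^*)$ also $t^B_{m+1} = t(\cycle^*) - t^F_{m+1} \le q(\cycle^*)/2$; when $t(\cycle^*) < q(\cycle^*)/2$ every prefix lies below the threshold and any split works.

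To handle dominance I would invoke \autoref{lemma:dominate} on each half. If the search discards $F_m$ in favour of a dominator $\lbl_a'$, then $\lblnodes(\lbl_a') \subseteq \lblnodes(F_m)$ keeps $\lbl_a'$ disjoint from $\lbl_b = B_m$, the bound $t(\lbl_a') \le t^F_m$ keeps the merge length-feasible (its minimum critical time is still $\ge q(\cycle^*)$), and $\redcost(\lbl_a') \le \redcost(F_m)$ yields $\redcost(\lbl_a'\concat\lbl_b) \le \redcost(\cycle^*)$. Applying the same reasoning to $\lbl_b$ and iterating the terminating dominance relation (as in the inductive argument preceding \autoref{lemma:dominate}), some surviving pair merges to a length-feasible cycle of reduced cost at most $\redcost(\cycle^*)$, which by optimality of $\cycle^*$ must equal the minimum.

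The main obstacle is the exact halfway boundary. With the strict bound $t < q/2$, a \emph{tight} cycle ($t(\cycle^*) = q(\cycle^*)$) whose forward prefix times hit $q(\cycle^*)/2$ exactly---which can happen through zero-travel-time edges near the middle---may admit no split satisfying strict inequalities on both sides simultaneously, since the crossing index then gives $t^B_{m+1} = q(\cycle^*)/2$ rather than a strict inequality. I would resolve this by breaking the tie asymmetrically, e.g.\ extending forward labels up to and including $t = q/2$ while extending their reversed counterparts strictly below, so that the crossing index always produces one admissible side and hence a valid merge. This boundary bookkeeping is the delicate point; by contrast, verifying the merged label's attributes and the inheritance of feasibility and reduced cost under dominance is routine.
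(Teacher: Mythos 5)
Your argument is essentially the paper's own: identify the first forward prefix whose travel time crosses the halfway threshold, observe that the complementary reversed prefix then lies within its own halfway bound and meets the forward label only at the split node, and repair any loss to dominance by substituting the dominating label and re-verifying disjointness, feasibility, and reduced cost via the conditions of Lemma~\ref{lemma:dominate}. The one point where you go beyond the paper is the explicit treatment of the tie $t = q/2$: the paper's proof only establishes the non-strict bound $t(\blbl_k) \leq q(\blbl_k)/2$ and asserts that $\blbl_k$ is therefore found, which silently skips the degenerate case (exact equality propagated by zero-travel-time edges) where the strict extension rule would block generation of the backward label --- your asymmetric tie-breaking is a correct and worthwhile patch for that corner case.
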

\begin{proof}
    Let $C=(i_0,i_1,\ldots,i_K)$ be a cycle with minimum reduced cost that is found by some version of full, monodirectional forward search with dominance checks.
    Suppose $\mathscr{\vec{L}}_C = (\flbl_0, \flbl_1, \ldots, \flbl_{K} )$ is the sequence of labels leading to~$C$ in this search, then choose~$\flbl_{k}$ to be the first label in~$\mathscr{\vec{L}}_C$ where~$t(\flbl_{k}) \geq q(\flbl_{k})/2$ is satisfied.
    Hence, label~$\lbl_a:=\flbl_{k}$ is also explored in bidirectional search.
    It represents the path $(i_0,i_1,\ldots,i_k)$.

Next, consider a sequence of labels that would generate the same cycle, but in reverse order, i.e., $(i_K,i_{K-1},\ldots,i_0)$.
Denote this sequence of labels by $\cev{\mathscr{L}}_C = ( \blbl_K, \blbl_{K-1}, \ldots, \blbl_{0} )$,
    then $\blbl_{k}$ represents the path $(i_K,i_{K-1},\ldots,i_k)$.
    From $t(C) = t(\flbl_{k})+t(\blbl_{k}) \leq q(C)$ and $t(\flbl_{k}) \geq q(\flbl_{k})/2$ it follows that
    \[
    t(\blbl_{k}) = t(C) - t(\flbl_{k}) \leq q(C) - \frac{q(\flbl_{k})}{2} \leq q(C) - \frac{q(C)}{2} = \frac{q(C)}{2} \leq \frac{q(\blbl_{k})}{2},
    \]
hence $\blbl_{k}$ is length-feasible and within the halfway point.
Therefore, unless it is dominated, $\blbl_{k}$ is found by bidirectional search, and we can choose $\lbl_{b} := \blbl_{k}$, since $\lbl_{a} \concat \lbl_{b}$ represents~$C$.
If $\blbl_{k}$ is discarded due to dominance, then (by transitivity of dominance) bidirectional search must find another label that dominates $\blbl_{k}$.
By the conditions of Lem.~\ref{lemma:dominate} we show that $\lbl_b$ can be chosen as this dominating label.

Labels $\lbl_a$ and $\lbl_b$ trivially start at the same node~$s=i_0$, 
and due to $v(\lbl_b) = v(\blbl_k)=i_k$, they also end at the same node $i_k$.
Due to $\lblnodes(\lbl_b) \subseteq \lblnodes(\blbl_{k})$, we have
$\lblnodes(\lbl_a) \cap \lblnodes(\lbl_b) \subseteq \lblnodes(\lbl_{a}) \cap \lblnodes(\blbl_{k}) = \{i_k\}$, i.e., they do not overlap otherwise.
Hence, they can be merged to $\lbl_a\concat\lbl_b$.
It remains to show that this merged label is length-feasible and has the same reduced cost as $C$.
This follows from the fact that $\lbl_a\concat\blbl_k$ is feasible and optimal, and $t(\lbl_b) \leq t(\blbl_{k})$, $q(\lbl_b) \geq q(\blbl_{k})$, and $\redcost(\lbl_b) \leq \redcost(\blbl_{k})$ hold by domination.

\end{proof}


Note that bidirectional search may produce more cycles than monodirectional search. The reason is that many of the labels at the halfway point,
could be dominated by labels found later in monodirectional search.
Adding these potentially sub-optimal cycles to the RMP can aid the convergence of column generation but it can make re-optimizing much harder. We try to safe-guard against this effect by sorting the cycles by reduced cost and returning at most a fixed number ($50$ in our implementation) of the ones with lowest reduced cost.



\section{Branch-and-Price for the Length-Constrained Cycle Partition Problem}\label{sec:bnp}
In the following, we discuss different aspects of how we extend the column generation method entailed by \autoref{sec:labeling} into an exact branch-and-price algorithm.
%
%

\ourparagraph{Branch and Bound}
We use a standard branching strategy based on implicit edge variables as described in \cite{costa_exact_2019}.
%
%
We branch on the most used edge variable that does not belong to a singleton cycle.
The rationale is that the most used edge would create the biggest disturbance in the subproblem, leading to an early fathoming of the subproblem due to infeasibility or finding a feasible integer solution.
For branch-and-bound node selection, we employ the best-estimate rule with plunging
described in \cite{achterberg_constraint_2007}.
After selecting a branch-and-bound node, we solve the LP relaxation using column generation. In each pricing iteration
the dynamic programming algorithm from \autoref{sec:labeling} is called from each node starting node $s\in V$.
As these calls are independent they are run in parallel.

\ourparagraph{Symmetry Breaking}
For each call to the dynamic program, i.e., from a fixed starting node $s$,
the search can safely ignore all nodes with an index less than the starting node.
Since each cycle contains a unique node with the lowest index, this restriction prevents the generation of an equivalent cycle with a different starting node. A further improvement on this restriction is to sort the node indices by their critical time, such that the pricing problem that explores the most nodes starts from the smallest critical time, allowing earlier pruning due to feasibility.

\ourparagraph{Heuristic and Exact Pricing}
We start each pricing call with a faster, incomplete dynamic programming algorithm that
relaxes the subset conditions in the dominance rule \eqref{dom-subset}.
If this fails to find a cycle with negative reduced cost, then exact pricing is performed.
From exact pricing we always compute a Lagrangian lower bound~\cite{cg-primer}.
Let~$z^*_s$ be the optimal objective value of the pricing problem for a fixed starting node~$s$, and let~$z^*_{MP}$, $z^*_{RMP}$ be the optimal objective value of MP and RMP, respectively, then $z^*_{MP} \geq z^*_{RMP} + \sum_{s \in V} z^*_s =: LB_{lg}$.
From integrality of the objective function, we can use $\lceil LB_{lg} \rceil$ as a valid lower bound.

\ourparagraph{Early Branching}
Another acceleration idea based on integrality, inspired by the technique in~\cite{coloring}, is to skip pricing in some branch-and-bound node $Q$ by looking at the lower bound $\lceil LB_{lg} \rceil$ of the parent node.
Let $z^*_{RMP}$ be the first RMP objective value for $Q$ obtained after removing columns due to branching. If~$\lceil z^*_{RMP} \rceil = \lceil LB_{lg} \rceil$, then computing $z^*_{MP}$ exactly will not improve over the lower bound of the parent node. Therefore, we can skip pricing in this subproblem and perform early branching.

\ourparagraph{Farkas Pricing and RMP Initialization}
If the RMP at a subproblem becomes infeasible due to branching and removal of columns, we use Farkas pricing \cite{Nunkesser2006,CeselliGLNS08} to generate new columns that render the RMP feasible again, or to prove that the MP is infeasible.
The root node RMP is always feasible, because we initialize it with the trivially feasible singleton cycles and the cycles in the primal solution generated by the Most-Critical-Vertex-Based Heuristic (MCV) from~\cite{lccp}.

\ourparagraph{Exploiting the Triangle Inequality}
The algorithm developed up to this point works for any choice of nonnegative travel times.
For many real-world applications, such as in aerial vehicle routing where distances are Euclidean, we know additionally that the triangle inequality holds, i.e., that
$t_{i,j} \leq t_{i,k} + t_{k,j}$ for all~$\{i,j\},\{i,k\},\{j,k\} \in E$.
Under this assumption, the set partitioning formulation can be turned into a set covering formulation, which is easier to solve because the dual solution $\pi$ becomes nonnegative.
A solution for the set partitioning formulation can always be retrieved from a set covering solution by removing nodes that appear in more than one cycle from all but one cycle. This is guaranteed to be feasible since removing a node could only decrease the total travel time and increase the minimum critical time of the cycle.
By the same arguments, we can prove the following lemma that show that nodes~$i\in V$ with $\pi_i=0$ can be temporarily ignored during dynamic programming. 
\begin{lemma}\label{lemma:tri-eq}
  Suppose travel times satisfy the triangle inequality, and we are given a dual solution $\pi\in\mathbb{R}^V$, then there exists an optimal cycle~$C$ to the pricing problem $\max_{C\in\Omega}\sum_{i \in C} \pi_i$ such that~$\pi_i > 0$ for all~$i \in C$.
\end{lemma}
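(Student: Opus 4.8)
The plan is to prove the lemma constructively, exactly mirroring the node-removal argument used just above to pass from a set-covering solution to a set-partitioning solution: starting from any optimal cycle, I delete all nodes of nonpositive weight and show that the shortened cycle stays length-feasible and remains optimal, while by construction only positive-weight nodes survive.

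Concretely, let $C^* = (i_0, i_1, \ldots, i_K = i_0)$ be any cycle attaining $\max_{C \in \Omega} \sum_{i \in C} \pi_i$, and let $S = \{i \in C^* : \pi_i \leq 0\}$. I form a new cycle $C$ by visiting the nodes of $C^* \setminus S$ in the same cyclic order, bypassing each deleted node by the direct edge between its surviving neighbors. I would then verify three things. First, $C$ is length-feasible: whenever a maximal run of deleted nodes $c_1, \ldots, c_m$ lying between two surviving nodes $a$ and $b$ is bypassed, repeated application of the triangle inequality gives $t_{a,b} \leq t_{a,c_1} + \cdots + t_{c_m,b}$, so summing over all bypasses yields $t(C) \leq t(C^*)$; and since the minimum over a subset of nodes is no smaller than the minimum over all of them, $q(C) = \min_{i \in C} q_i \geq \min_{i \in C^*} q_i = q(C^*)$. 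Together these preserve the constraint $t(C) \leq q(C)$. Second, the objective does not decrease, because $\sum_{i \in C} \pi_i = \sum_{i \in C^*} \pi_i - \sum_{i \in S} \pi_i \geq \sum_{i \in C^*} \pi_i$ as $\sum_{i \in S} \pi_i \leq 0$; hence $C$ is again optimal. Third, every surviving node has $\pi_i > 0$ by definition of $S$. Note this argument never uses global nonnegativity of $\pi$, so it applies to any $\pi \in \mathbb{R}^V$.

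The one point needing care, and the only real obstacle, is ensuring that $C$ is nonempty so that it is an admissible cycle: deleting all of $S$ must leave at least one node. This holds precisely when the optimal value is strictly positive, since $\sum_{i \in C^*} \pi_i > 0$ forces $C^*$ to contain at least one node of positive weight, which then survives the deletion. I would therefore frame the conclusion in the regime relevant to pricing, where an improving (positive-value) cycle exists; in the degenerate case $\max_{C} \sum_{i \in C} \pi_i \le 0$ pricing terminates and the claim is not needed. A secondary detail to record is that the bypass edges must exist in $G$, which is guaranteed in the metric setting where the graph is assumed complete (e.g.\ Euclidean distances).
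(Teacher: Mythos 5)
Your proof is correct and follows exactly the argument the paper intends: the paper gives no separate proof of this lemma, merely stating ``by the same arguments'' in reference to the preceding node-removal reasoning (removing a node can only decrease the travel time, via the triangle inequality, and can only increase the minimum critical time), which is precisely your construction of deleting all nonpositive-weight nodes. Your additional care about the degenerate case where the deletion could empty the cycle (and about the existence of the bypass edges) goes beyond what the paper records and correctly identifies the regime in which the lemma is actually invoked during pricing.
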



\label{subsec:tri-ineq}


\section{Computational Results}\label{sec:results}
We implemented the branch-and-price algorithm described in \autoref{sec:bnp} using the
Python wrapper of the branch-and-price framework
SCIP~\cite{pyscipopt,BestuzhevaBesanconEtal2023_Enabling}, interfacing to a
Rust implementation for solving the pricing problem.
It calls the dynamic programming method from \autoref{sec:labeling} from each starting node in parallel using the Rayon Rust library~\cite{rayon_rs}. 
The rest of SCIP is sequential.
%
%
All experiments were run with a time limit of two hours on a cluster of identical machines equipped
with Intel(R) Xeon(R) Gold 5122 processors with 3.6\,GHz and 96\,GB of RAM; each chip had 8~cores, and allowed for 16~threads to run in parallel.
The code and the instance-wise results of the experiments are publicly
available on GitHub at \url{https://github.com/mmghannam/lccp-bnp}.

The goal of our experiments was to analyze the performance of our branch-and-price
algorithm, in particular to answer the following questions:
\begin{enumerate}
\item How does the new method compare to the previous approach from~\cite{lccp} in terms of speed and ability to solve instances to optimality?
\item How tight is the linear relaxation of the set partitioning formulation in general and in comparison to the compact formulation from~\cite{lccp}?
\item What is the performance impact of the individual acceleration techniques?
\end{enumerate}%
Moreover,
we
were curious whether it matters if the travel times
respect the triangle inequality, and how the performance of the
algorithms is affected if we modify travel times in the above
instances to satisfy the triangle inequality.

We use the standard benchmark set from~\cite{lccp},
which consists of 84~instances that have between 14 and 100~nodes.
All algorithms evaluated were initialized with a solution provided by the MCV heuristic from~\cite{lccp}. 
We compare results for three major algorithms:
\begin{itemize}
    \item \textsc{bnc-sec}: branch and cut with subtour elimination constraints from~\cite{lccp} based on Gurobi~10.0.3~\cite{gurobi}
    \item \textsc{bnp-basic}: plain branch and price without any improvement technique
    \item \textsc{bnp-full}: branch and price with all improvement techniques
\end{itemize}
%
Additionally, we ran the following variations of \textsc{bnp-full} to quantify the impact of four components:
\begin{itemize}
    \item \textsc{bnp-nobidir}: branch and price with monodirectional labeling
    \item \textsc{bnp-nopar}: branch and price without parallelization
    \item \textsc{bnp-nosymbr}: branch and price without symmetry breaking
    \item \textsc{bnp-noearly}: branch and price without early branching
\end{itemize}
Note that the labeling algorithm in \textsc{bnp-basic} already performs dominance checks,
since otherwise not more than the smallest instances could be solved.

In Tab.~\ref{tab:aggregated} we report the shifted geometric mean of the solving times for all instances solved by at least one algorithm.
We use a shift of 1~second;
time outs are included with the time limit.
Column~``Ratio'' states the relative solving time w.r.t. the fastest
solver \textsc{bnp-full}.
In Fig.~\ref{fig:plot} we plot the number of solved instances over time for each
algorithm.

\ourparagraph{Overall Results}
We can first observe that \textsc{bnp-full} drastically outperforms \textsc{bnc-sec}, on
average being more than an order of magnitude ($14.6$ times) faster.
\textsc{bnp-full} is able to solve $13$~more instances to optimality.
The largest instance solved features $76$~nodes compared to $52$~nodes previously.
Notably, all $39$~instances solved by \textsc{bnc-sec} are also solved
by \textsc{bnp-full}.

\ourparagraph{LP Relaxations}
%
While \textsc{bnc-sec} could solve the root relaxation (including subtour elimination) on all
$84$~instances, the column generation loop of \textsc{bnp-full} converged to an optimal
root LP solution for only $56$~instances, running into the memory or time limit in the
remaining cases.
Nonetheless, whenever computed successfully, the root dual bound of the set partitioning formulation proves to be significantly tighter than the root dual bound of \textsc{bnc-sec}.
For the instances where both algorithms could solve the linear relaxation at the root
node, the average gap with respect to the best known primal solution is $1.1\%$ for \textsc{bnp-full} vs.~$23.6\%$
for \textsc{bnc-sec}.
In addition, on instances for which the optimal objective value $z^*$ is known the root
dual bound of the set partitioning formulation (rounded up due to integrality) is at least
$z^* - 1$.
This finding suggests that the set-partitioning model may have a round-up property similar to that investigated in~\cite{scheithauer1995modified}.

\ourparagraph{Primal Bounds}
In \cite{lccp} it is reported that the MCV heuristic is effective in producing good, and often optimal, solutions. Specifically, it finds the optimal solution for $21$ out of the $39$ instances that are solved by both \textsc{bnc-sec} and \textsc{bnp-full}. For the $13$ instances newly solved by \textsc{bnp-full}, the heuristic finds the optimal solution for only $2$ of them, displaying the importance of the exact branch-and-price run also on the primal side. Interestingly, for instances where both algorithms time out, the primal bounds imposed by the MCV heuristic are hard to improve on. Only \textsc{bnp-full} manages to improve the primal bound for $2$ of these instances, by a value of $1$.

\ourparagraph{Impact of the Improvement Techniques}
Bidirectional labeling is the most effective amongst the improvement techniques.
In comparison, monodirectional labeling generates substantially more labels and results in a $4.4$ times slowdown overall.
%
The remaining improvement techniques also contribute significantly to the performance, although to a lesser degree.
While disabling symmetry breaking and early branching each lead to $7$ fewer instances
solved, parallelization only helps to solve two more instances to optimality.
Last but not least, the importance of the improvement techniques is underlined by the fact
that \textsc{bnp-basic}, the algorithm without any improvement techniques, solves
significantly fewer instances and is slower than \textsc{bnc-sec}.
\begin{figure}[t]
    \centering
    \begin{minipage}[b]{0.39\linewidth}
          \small
  \begin{tabular}{lrrr}
    \toprule
    Solver & \hspace*{-1.5em} Time [s] & Ratio & Solved \\
    \midrule
    \textsc{bnc-sec} & 142.8 & 14.6 & 39 \\
    \textsc{bnp-basic} & 189.1 & 19.3 & 30 \\
    \textsc{bnp-full} & \textbf{9.8} & \textbf{1.0} & \textbf{52} \\
    \midrule
    \textsc{bnp-nobidir} & 43.1 & 4.4 & 40 \\
    \textsc{bnp-nopar} & 12.5 & 1.3 & 50 \\
    \textsc{bnp-nosymbr} & 19.6 & 2.0 & 45 \\
    \textsc{bnp-noearly} & 16.0 & 1.6 & 45 \\
    \bottomrule
  \end{tabular}

        \captionof{table}{Aggregated results.}
        \label{tab:aggregated}
    \end{minipage}
    \hfill
    \hspace{0.5cm}
    \begin{minipage}[b]{0.55\linewidth}
        \includegraphics[width=\linewidth]{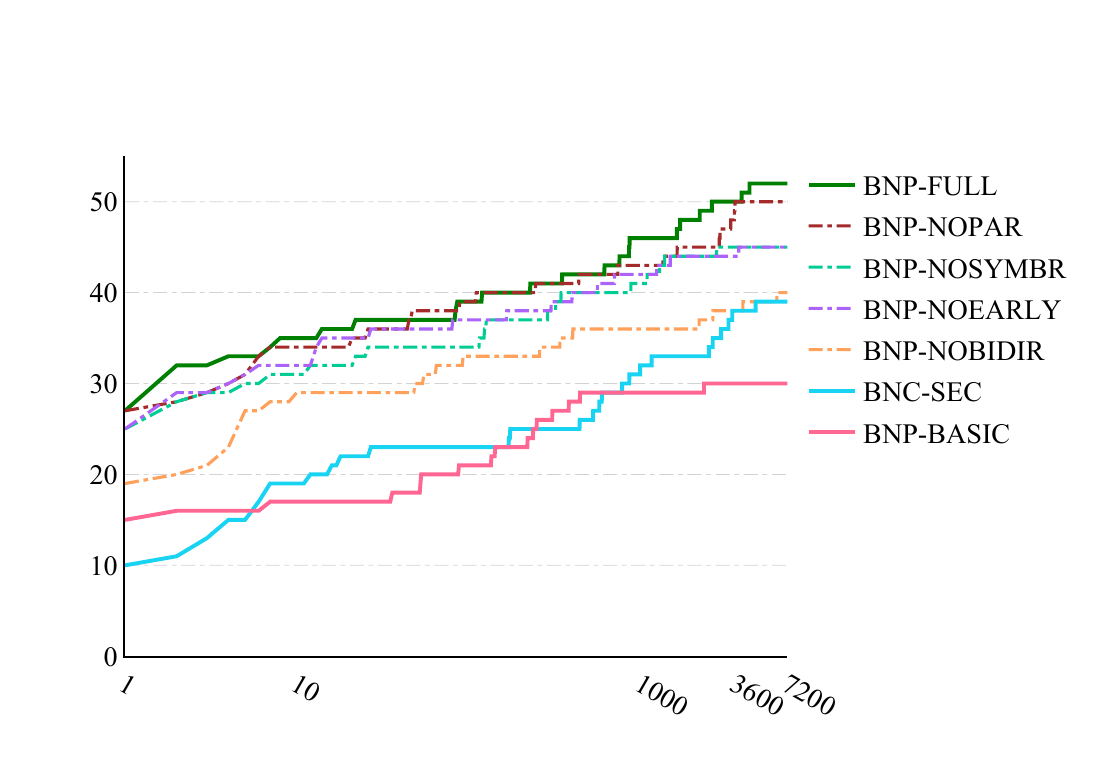}
        \vspace{-.9cm}
        \captionof{figure}{Instances solved over time.}
        \label{fig:plot}
    \end{minipage}
    \vspace{-2em}
\end{figure}
\ag{For plot.pdf: reduce width so it is not so crammed to the table (simply tighter scale on x axis); space between ``Time'' and ``[s]''; start y axis at 0; capital letters in legend and larger font size; optional: axis also at top and right (or arrows on the axis); y axis as long as the table height; thin grey horizontal lines at 10,20,30,40,50 instances; same font as text; order legend like solved instances at 7200s (unsure)}
\ag{I guess you can also remove the axis labels, that makes it less crammed and should be clear from the caption.}

\ourparagraph{Triangle Inequality}
Finally, we modified the $84$~instances from~\cite{lccp} in order to create a new set of
instances that respect the triangle inequality by replacing the travel time on each by the
distance of the shortest paths between its nodes.
Curiously, this change makes the instances harder to solve for both approaches.
On the new benchmark set imposing the triangle inequality, the mean solving time is
increased to $15.0$~seconds for \textsc{bnp-full} and $187.1$~seconds
for \textsc{bnc-sec}.
\textsc{bnp-full} solves $4$~instances less and \textsc{bnc-sec} solves $1$~instance
more than on the original test set.
These slowdowns may be explained by the shorter edge lengths leading to a larger number of
length-feasible cycles.

Note that the above results for \textsc{bnp-full} do not yet exploit the triangle
inequality.
The improvements described in \autoref{sec:bnp}, i.e., to use a set covering formulation
and to remove nodes with zero duals during labeling, enable it to solve $7$~more instances
($55$ in total) and reduce the average solving time to $7.5$~seconds.
This is less than on the original test set, and around $25.2$~times faster
than \textsc{bnc-sec}.
%


\ourparagraph{Conclusion}
To summarize, our branch-and-price algorithm for the length-constrained cycle partition
problem proves to outperform the state of the art significantly, but the results also show
that this is only possible with the help of some nontrivial improvement techniques.
These allow us to close $13$~open instances from the standard benchmarks, and to scale to
larger instances with up to $76$~nodes, compared to $52$~nodes for the previous
branch-and-cut method.
Future work should investigate whether the use of cutting planes can improve the
performance of the branch-and-price algorithm even further, and whether the improvement
techniques developed for LCCP are also valuable for similar problems such as kidney
exchange.

\bibliographystyle{splncs04}
\bibliography{bibliography}
\end{document}